\def\rank{\mbox{\rm rank}}
\def\corank{\mbox{\rm corank}}
\def\Int{\mbox{\rm Int}}
\def\dim{\mbox{\rm dim}}
\def\max{\mbox{\rm max}}
\def\And{\mbox{\rm ~and~}}
\def\Int{\mbox{\rm Int}}
\def\L{\mbox{\rm L}}
\def\({\mbox{\rm (}}\def\){\mbox{\rm )}}
\def\sp{\hspace{1ex}}
\def\sp{\hspace{0.3cm}}
\newtheorem{theorem}{Theorem}[section]
\newtheorem{proposition}[theorem]{Proposition}
\newtheorem{lemma}[theorem]{Lemma}
\begin{document}
%\small
\normalsize
%\large
%\huge
\title{\huge \textbf{M\"{o}bius Conjugation and Convolution Formulae}}

\author{
    Suijie Wang\\
    Institute of Mathematics\\
    Academia Sinica, Taiwan\\
    wangsuijie@math.sinica.edu.tw
}
\date{\today}
\maketitle
%\linenumbers

\begin{abstract}

Let $P$ be a locally finite poset with the interval space $\Int(P)$, and $R$ a ring with identity. We shall introduce the M\"{o}bius conjugation $\mu^\ast$ sending each function $f:P\to R$ to an incidence function $\mu^\ast(f):\Int(P)\to R$ such that $\mu^\ast(fg)=\mu^\ast(f)\ast\mu^\ast(g)$. Taking $P$ to be the intersection poset of a hyperplane arrangement $\mathcal{A}$, we shall obtain a convolution identity for the number $r(\mathcal{A})$ of regions and the number $b(\mathcal{A})$ of relatively bounded regions, and a reciprocity theorem of the characteristic polynomial $\chi(\mathcal{A},t)$, which also leads to a combinatorial interpretation to the values $|\chi(\mathcal{A},-q)|$ for large primes $q$. Moreover, all known convolution identities on Tutte polynomials of matroids will be direct consequences after specializing the poset $P$ and functions $f,g$. \\
\noindent{\bf Keywords:} M\"{o}bius conjugation, convolution formula, reciprocity theorem, characteristic polynomial, Tutte polynomial, hyperplane arrangement, matroids
\end{abstract}

\section{M\"{o}bius Conjugation}
We use the definitions and notations of posets from \cite{Stanley1}, and all posets in this paper are assumed to be locally finite. Let $P$ be a poset and $R$ a ring with identity. Denote by $\Int(P)$ the interval space of $P$ and $\mathcal{I}(P,R)=\{\alpha:\Int(P)\to R\}$ the incidence algebra of $P$ whose multiplication structure is given by the convolution product, i.e., for any $\alpha, \beta \in \mathcal{I}(P)$ and $x\le y$ in $P$,
\[
[\alpha\ast \beta](x,y)=\sum_{x\le z\le y}\alpha(x,z)\,\beta(z,y),\sp \forall\, \alpha,\beta\in \mathcal{I}( P).
\]
Let $R^P$ be the ring of all functions $f:P\to R$ whose ring structure is given by point-wise multiplication and addition. Define the \emph{M\"{o}bius conjugation} $\mu^\ast: R^P \to \mathcal{I}(P, R)$ to be
\[\mu^\ast(f)=\mu\ast\delta(f)\ast\zeta,\sp \forall\; f\in R^P.\]
where $\mu$ is the M\"{o}bius function of $P$, and the map $\delta: R^P \to \mathcal{I}(P, R)$ is defined by $\delta(f)(x,y)=f(x)$ if $x=y$ and $0$ otherwise, for all $f\in R^P$ and $x\le y$ in $P$.

\begin{theorem}\label{Mobius-conj}
With above settings, the map $\mu^\ast$ is a ring monomorphism, i.e.,
\[
\mu^\ast(fg)=\mu^\ast(f)\ast\mu^\ast(g),\sp\forall\; f,g\in R^P.
\]
\end{theorem}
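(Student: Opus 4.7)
The plan is to reduce the theorem to a simple fact about the auxiliary map $\delta$: namely that, although $\delta$ is defined only on the diagonal, it nevertheless satisfies $\delta(fg) = \delta(f)\ast \delta(g)$ in the incidence algebra $\mathcal{I}(P,R)$. Once this is in hand, the multiplicativity of $\mu^\ast$ will follow from nothing more than associativity of the convolution product and the defining identity $\zeta\ast\mu = \mu\ast\zeta = \epsilon$, where $\epsilon$ is the identity of $\mathcal{I}(P,R)$.

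First I would verify this key lemma by direct computation: for $x\le y$ in $P$,
\[
[\delta(f)\ast\delta(g)](x,y) \;=\; \sum_{x\le z\le y}\delta(f)(x,z)\,\delta(g)(z,y),
\]
and only the term $z=x=y$ can contribute, giving $f(x)g(x) = (fg)(x) = \delta(fg)(x,y)$. (When $x<y$, both sides vanish.) A parallel but easier verification shows that $\delta$ is $R$-linear, and that $\delta(1_{R^P})=\epsilon$. Thus $\delta$ is itself a ring homomorphism $R^P\to\mathcal{I}(P,R)$.

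Next I would compute, using only associativity of $\ast$ and $\zeta\ast\mu=\epsilon$,
\[
\mu^\ast(f)\ast\mu^\ast(g) \;=\; (\mu\ast\delta(f)\ast\zeta)\ast(\mu\ast\delta(g)\ast\zeta) \;=\; \mu\ast\delta(f)\ast\epsilon\ast\delta(g)\ast\zeta \;=\; \mu\ast\delta(fg)\ast\zeta \;=\; \mu^\ast(fg),
\]
which is the asserted multiplicativity. Additivity of $\mu^\ast$ follows from the additivity of $\delta$ and bilinearity of $\ast$, and $\mu^\ast(1_{R^P}) = \mu\ast\epsilon\ast\zeta = \mu\ast\zeta = \epsilon$ shows $\mu^\ast$ preserves identities, so it is a ring homomorphism.

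For injectivity, I would use that $\mu$ and $\zeta$ are two-sided units of $\mathcal{I}(P,R)$ (inverse to each other). Hence $\mu^\ast(f)=0$ forces $\delta(f)=\mu^{-1}\ast 0\ast\zeta^{-1}=0$, and evaluating at diagonal pairs $(x,x)$ gives $f(x)=0$ for every $x\in P$. I do not anticipate a serious obstacle here; the only conceptual point that must be noticed carefully is that the apparent mismatch between pointwise multiplication in $R^P$ and convolution in $\mathcal{I}(P,R)$ disappears precisely because $\delta(f)$ is supported on the diagonal, which collapses the convolution sum to a single term and makes the two products agree on the image of $\delta$.
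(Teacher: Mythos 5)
Your proposal is correct and follows essentially the same route as the paper: the key observation $\delta(fg)=\delta(f)\ast\delta(g)$ (which the paper calls obvious and you verify by collapsing the convolution sum to the diagonal), combined with $\zeta\ast\mu=\mu\ast\zeta=\epsilon$ to cancel the middle factors, and invertibility of $\mu$ and $\zeta$ for injectivity. Your version merely spells out the details (additivity, preservation of the identity, the kernel argument) that the paper leaves implicit.
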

\begin{proof}Given $f,g\in R^P$, it is obvious that $\delta(fg)=\delta(f)\ast\delta(g)$. Then
\begin{eqnarray*}
\mu^\ast(fg)=\mu\ast\delta(f)\ast\zeta\ast\mu\ast\delta(g)\ast\zeta=\mu^\ast(f)\ast\mu^\ast(g).
\end{eqnarray*}
So $\mu^\ast$ is a homomorphism as rings. To prove the injectivity, suppose $\mu^\ast(f)=\mu^\ast(g)$ for some $f,g\in R^P$. Multiplying $\zeta$ on the left hand side and $\mu$ on the right hand side respectively, we obtain that $\delta(f)=\delta(g)$. Thus $f=g$.
\end{proof}
Multiplicative identities for chromatic polynomials first appeared in \cite{Tutte1967} by W. Tutte in 1967. In 1999, W. Kook, V. Riener and D. Stanton \cite{Staton1999} found a convolution formula for Tutte polynomial of matroids. After that, Joseph P. S. Kung \cite{Joseph2004} gave a multiplicative identities for characteristic polynomials of matroids in 2004. And he also formulated many generalizations of all previous identities in 2010 \cite{Joseph2010}. We shall see that Theorem \ref{Mobius-conj} gives the algebraic essence to all these known identities. In fact, by specializing the poset $P$ and the functions $f,g$ of Theorem \ref{Mobius-conj}, we will obtain all those identities.

\section{Convolution Formula on Characteristic Polynomials}
A hyperplane arrangement $\mathcal{A}$ in a vector space $V$ is a collection of finite hyperplanes of $V$. The intersection semi-lattice $L(\mathcal{A})$ of $\mathcal{A}$ is defined to be the collection of all nonempty intersections of hyperplanes in $\mathcal{A}$, whose partial order is given by the inverse of set inclusion. Namely,
\[
L(\mathcal{A})=\{\cap_{H\in\mathcal{B}}H\mid \mathcal{B}\subseteq \mathcal{A}\},
\]
whose minimal element is $\hat{0}=\cap_{H\in \emptyset} H:=V\in L(\mathcal{A})$. Artificially adding a maximal element $\hat{1}=\emptyset$ to $L(\mathcal{A})$, $L(\mathcal{A})$ then becomes a geometric lattice, denoted $L^{\ast}(\mathcal{A})=L(\mathcal{A})\cup \{\hat{1}\}$ and called the \emph{reduced intersection lattice} of $\mathcal{A}$. With the assumptions $\dim(\hat{1})=\infty$ and $t^\infty=0$, the \emph{characteristic polynomial} $\chi(\mathcal{A},t)\in \Bbb{C}[t]$ of $\mathcal{A}$ can be written as
\[
\chi(\mathcal{A},t):=\sum_{X\in L(\mathcal{A})}\mu(\hat{0},X)\,t^{{\small\dim(X)}}=\sum_{X\in L^{\ast}(\mathcal{A})}\mu(\hat{0},X)\,t^{{\small\dim(X)}}.
\]
Given $X,Y\in L^{\ast}(\mathcal{A})$ and $X\leq Y$, let $\mathcal{A}_{X,Y}$ be a hyperplane arrangement in the vector space $X$ defined by
\[
\mathcal{A}_{X,Y}=\{H \cap X \mid H\in \mathcal{A}\,\,\text{with}\,\, Y \subseteq H \And\,X\nsubseteq H\}.
\]
In particular, $\mathcal{A}_{X,X}=\emptyset$. If $X,Y\in L(\mathcal{A})$ and $X\le Y$, it is easily seen that $L(\mathcal{A}_{X,Y})\cong [X,Y]$ as lattices.
It follows that the M\"{o}bius function of $L^\ast(\mathcal{A}_{X,Y})$ is the same as the restriction of the M\"{o}bius function of $L^\ast(\mathcal{A})$ onto the interval $[X,Y]$. Then the characteristic polynomial of $\mathcal{A}_{X,Y}$ is
\[
\chi(\mathcal{A}_{X,Y},t)=\sum_{X\le Z\le Y}\mu(X,Z)\,t^{{\small \dim(Z)}},
\]
where $\mu$ is the M\"obius function of $L^\ast(\mathcal{A})$. In particular, $\chi(\mathcal{A}_{\hat{0},\hat{1}},t)=\chi(\mathcal{A},t)$ and $\chi(\mathcal{A}_{\hat{1},\hat{1}},t)=0.$
%If $\kappa\in (\Bbb{Z}\cup\{\infty\})^{L^\ast(\mathcal{A})}$ is defined to be $\kappa(X)=\dim(X)$ if $X\in L(\mathcal{A})$, and $\kappa(\emptyset)=\infty$, from the definition of $\kappa$-characteristic function, we have
%\[\chi(\mathcal{A}_{X,Y},t)=\chi_{\kappa}(t)(X,Y).\]
For convenience, we denote, for any $X\in L(\mathcal{A})$,
\begin{eqnarray*}
\mathcal{A}|X&=&\mathcal{A}_{\hat{0},X}=\{H\in \mathcal{A}\mid X\subseteq H\},\\
\mathcal{A}/X&=&\mathcal{A}_{X,\hat{1}}=\{H\cap X\mid H\in \mathcal{A}-\mathcal{A}|X\}.
\end{eqnarray*}
\begin{theorem}\label{arragement}
Let $\mathcal{A}$ be a hyperplane arrangement with the reduced intersection lattice $L^\ast(\mathcal{A})$. If $X\le Y$ in $L^\ast(\mathcal{A})$, then we have
\begin{equation*}
\chi(\mathcal{A}_{\small X,Y},st)=\sum_{Z\in L^\ast(\mathcal{A});X\le Z\le Y}\chi(\mathcal{A}_{X,Z},s)\,\chi(\mathcal{A}_{Z,Y},t).
\end{equation*}
Taking $X=\hat{0}$ and $Y=\hat{1}$ in particular, we have
\begin{equation}\label{arr-conv}
\chi(\mathcal{A},st)=\sum_{X\in L(\mathcal{A})}\chi(\mathcal{A}|X,s)\,\chi(\mathcal{A}/X,t).
\end{equation}
\begin{proof}Define $f,g:L^\ast(\mathcal{A})\to \Bbb{C}[s,t]$ to be $f(X)=t^{\dim(X)}$ and $g(X)=s^{\dim(X)}$. Then for any $X\le Y$ in $\L^\ast(\mathcal{A})$, we have
\[\mu^\ast(f)(X,Y)=\left[\mu\ast \delta(f)\ast \zeta\right](X,Y)=\sum_{X\le Z\le Y}\mu(X,Z)t^{\dim(Z)}=\chi(\mathcal{A}_{X,Y},t).\]
Similarly, $\chi(\mathcal{A}_{X,Y},s)=\mu^\ast(g)(X,Y)$ and $\chi(\mathcal{A}_{X,Y},st)=\mu^\ast(gf)(X,Y)$. Applying Theorem \ref{Mobius-conj}, then
\[\chi(\mathcal{A}_{X,Y},st)=\left[\mu^\ast(g)\ast\mu^\ast(f)\right](X,Y)=\sum_{Z\in L^\ast(\mathcal{A});X\le Z\le Y}\mu^\ast(g)(X,Z)\mu^\ast(f)(Z,Y).\]
This completes the proof.
\end{proof}
\end{theorem}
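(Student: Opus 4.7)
The plan is to deduce the claim from Theorem \ref{Mobius-conj} applied to the poset $P = L^\ast(\mathcal{A})$ and the ring $R = \mathbb{C}[s,t]$, by choosing two functions whose M\"obius conjugates encode the one-variable characteristic polynomials and whose pointwise product encodes the two-variable substitution $st$.

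Concretely, I would introduce $f, g \in R^{L^\ast(\mathcal{A})}$ defined by
\[
f(Z) = t^{\dim(Z)}, \qquad g(Z) = s^{\dim(Z)},
\]
so that the pointwise product is $(fg)(Z) = (st)^{\dim(Z)}$. Using the definition $\mu^\ast(f) = \mu \ast \delta(f) \ast \zeta$, a direct expansion of the two convolutions yields
\[
\mu^\ast(f)(X,Y) = \sum_{X \le Z \le Y} \mu(X,Z)\, t^{\dim(Z)}.
\]
The remarks preceding the theorem guarantee that the restriction of the M\"obius function of $L^\ast(\mathcal{A})$ to the interval $[X,Y]$ agrees with the M\"obius function of $L^\ast(\mathcal{A}_{X,Y})$, so the right-hand side is exactly $\chi(\mathcal{A}_{X,Y},t)$. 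The parallel calculation gives $\mu^\ast(g)(X,Y) = \chi(\mathcal{A}_{X,Y},s)$ and $\mu^\ast(fg)(X,Y) = \chi(\mathcal{A}_{X,Y},st)$.

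With these identifications in place, I would invoke Theorem \ref{Mobius-conj} to obtain the algebraic identity $\mu^\ast(fg) = \mu^\ast(g) \ast \mu^\ast(f)$ in $\mathcal{I}(L^\ast(\mathcal{A}), R)$. Evaluating both sides at $(X,Y)$ and expanding the convolution then gives
\[
\chi(\mathcal{A}_{X,Y}, st) = \sum_{X \le Z \le Y} \chi(\mathcal{A}_{X,Z}, s)\, \chi(\mathcal{A}_{Z,Y}, t),
\]
which is the desired interval formula. The consequence \eqref{arr-conv} follows by specializing $X = \hat 0$, $Y = \hat 1$: the $Z = \hat 1$ contribution drops because $\chi(\mathcal{A}_{\hat 1,\hat 1},t) = 0$, and what remains is a sum over $Z \in L(\mathcal{A})$ with $\chi(\mathcal{A}_{\hat 0,Z},s) = \chi(\mathcal{A}|Z,s)$ and $\chi(\mathcal{A}_{Z,\hat 1},t) = \chi(\mathcal{A}/Z,t)$.

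There is no genuine obstacle, since the substantive content has been absorbed into Theorem \ref{Mobius-conj}; the remaining work is purely the dictionary translating the combinatorial quantity $\chi(\mathcal{A}_{X,Y},t)$ into the algebraic one $\mu^\ast(f)(X,Y)$. The only point deserving care is verifying that $\dim(\cdot)$ and the M\"obius function restrict correctly so that each factor $\chi(\mathcal{A}_{X,Z},s)$ and $\chi(\mathcal{A}_{Z,Y},t)$ really refers to the sub-arrangement's characteristic polynomial rather than to a global quantity, which is ensured by the lattice isomorphism $L(\mathcal{A}_{X,Y}) \cong [X,Y]$ recorded earlier in the text.
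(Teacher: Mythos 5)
Your proposal is correct and follows essentially the same route as the paper: the same choice $f(Z)=t^{\dim(Z)}$, $g(Z)=s^{\dim(Z)}$, the same identification $\mu^\ast(f)(X,Y)=\chi(\mathcal{A}_{X,Y},t)$, and the same application of Theorem \ref{Mobius-conj} to $\mu^\ast(gf)=\mu^\ast(g)\ast\mu^\ast(f)$ evaluated at $(X,Y)$. Your added remark on the specialization $X=\hat 0$, $Y=\hat 1$ (the $Z=\hat 1$ term vanishing since $\chi(\mathcal{A}_{\hat 1,\hat 1},t)=0$) is a correct detail the paper leaves implicit.
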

Joseph P. S. Kung \cite{Joseph2004} found the convolution formula (\ref{arr-conv}) for characteristic polynomials of matroids. Here the formula (\ref{arr-conv}) of Theorem \ref{arragement} extends it to affine hyperplane arrangements, which are not necessarily a matroid. In the next two subsections, we shall give two combinatorial identities by applying the formula (\ref{arr-conv}).

\subsection{Convolution Formula on $r(\mathcal{A})$ and $b(\mathcal{A})$}
If $\mathcal{A}$ is a hyperplane arrangement in the real vector space $V=\Bbb{R}^n$, its complement $M(\mathcal{A})=V-\cup_{H\in\mathcal{A}}H$ consists of finite many disjoint connected components, called \emph{regions} of $\mathcal{A}$. If $\dim(V)=n$, denote by $\mathscr{R}(\mathcal{A})$ the collection of all regions of $M(\mathcal{A})$ and $r(\mathcal{A})=\#\mathscr{R}(\mathcal{A})$.
Let $W$ be the subspace spanned by the normal vectors of $H$ for all $H\in \mathcal{A}$. A region $\Delta\in \mathscr{R}(\mathcal{A})$ is called \emph{relatively bounded} if $\Delta\cap W$ is bounded in $W$. Denote by $\mathscr{B}(\mathcal{A})$ the collection of all relatively bounded regions of $M(\mathcal{A})$ and $b(\mathcal{A})=\#\mathscr{B}(\mathcal{A})$. Zaslavski formula \cite{Zaslavsky} states that
\begin{eqnarray*}
r(\mathcal{A})=(-1)^{\small \dim(V)}\chi(\mathcal{A},-1),\sp b(\mathcal{A})=(-1)^{{\small \rank(\mathcal{A})}}\chi(\mathcal{A},1),
\end{eqnarray*}
where $\rank(\mathcal{A})=\max\{\rank(X)\mid X\in L(\mathcal{A})\}$ and $\rank(X)=\dim(V)-\dim(X)$. Then the following convolution formula of $r(\mathcal{A})$ and $b(\mathcal{A})$ can be easily obtained from (\ref{arr-conv}).
\begin{theorem}\label{region-conv}Denote by $\corank(X)=\rank(\mathcal{A})-\rank(X)$. Then
\begin{eqnarray*}
b(\mathcal{A})&=&\sum_{X\in L(\mathcal{A})}(-1)^{{\small \corank(X)}}\,r(\mathcal{A}|X)\,r(\mathcal{A}/X),\\
r(\mathcal{A})&=&\sum_{X\in L(\mathcal{A})}(-1)^{{\small \corank(X)}}\,r(\mathcal{A}|X)\,b(\mathcal{A}/X).
\end{eqnarray*}
\end{theorem}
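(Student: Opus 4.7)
The plan is to derive both identities by specializing the convolution formula $(\ref{arr-conv})$ of Theorem~\ref{arragement} at two carefully chosen pairs $(s,t)\in\{\pm1\}^{2}$ and translating every resulting evaluation $\chi(\mathcal{B},\pm1)$ into a region count via Zaslavsky's formula. Writing $n=\dim(V)$, I will repeatedly use the fact that $\mathcal{A}|X$ lives in the full ambient space $V$ of dimension $n$, while $\mathcal{A}/X$ lives in $X$, of dimension $\dim(X)$ and of rank $\rank(\mathcal{A}/X)=\rank(\mathcal{A})-\rank(X)=\corank(X)$. Before substituting I will first confirm this rank identity, which follows by extending the isomorphism $L(\mathcal{A}_{X,Y})\cong[X,Y]$ noted just above Theorem~\ref{arragement} to identify $L(\mathcal{A}/X)$ with the principal filter $\{Y\in L(\mathcal{A})\mid Y\ge X\}$, whose top element $\cap_{H\in\mathcal{A}}H$ has dimension $n-\rank(\mathcal{A})$.

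For the first identity I take $s=t=-1$, so $st=1$. Equation $(\ref{arr-conv})$ combined with Zaslavsky then reads
\[
(-1)^{\rank(\mathcal{A})}\,b(\mathcal{A})=\sum_{X\in L(\mathcal{A})}(-1)^{n+\dim(X)}\,r(\mathcal{A}|X)\,r(\mathcal{A}/X).
\]
Multiplying both sides by $(-1)^{\rank(\mathcal{A})}$ transports the leading sign into the summand, producing the cumulative exponent $\rank(\mathcal{A})+n+\dim(X)$; because this differs from $\corank(X)=\rank(\mathcal{A})-n+\dim(X)$ by the even integer $2n$, it can be replaced by $\corank(X)$ without changing signs, which yields the first identity.

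For the second identity I take $s=-1$ and $t=1$, so $st=-1$. This time $(\ref{arr-conv})$ and Zaslavsky give
\[
(-1)^{n}\,r(\mathcal{A})=\sum_{X\in L(\mathcal{A})}(-1)^{n}\,(-1)^{\corank(X)}\,r(\mathcal{A}|X)\,b(\mathcal{A}/X),
\]
after which the two factors of $(-1)^{n}$ cancel and the claimed formula drops out.

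I do not anticipate any real obstacle, since the whole argument reduces to two substitutions followed by sign bookkeeping. The mildly nonformal step is the rank identity $\rank(\mathcal{A}/X)=\corank(X)$ in the affine setting; once that is in hand, everything else is mechanical.
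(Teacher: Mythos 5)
Your route --- specializing the convolution formula (\ref{arr-conv}) at $(s,t)=(-1,-1)$ and $(s,t)=(-1,1)$ and converting each evaluation by Zaslavsky's formulas --- is exactly the quick algebraic derivation the paper alludes to when it says the theorem ``can be easily obtained from (\ref{arr-conv})''; the proof the paper actually writes out is a different, self-contained combinatorial one (counting faces of closures of regions via Euler characteristics, after reducing to the essential arrangement $\mathcal{A}_W$). Your sign bookkeeping is correct: for $(s,t)=(-1,-1)$ the exponent $\rank(\mathcal{A})+n+\dim(X)$ agrees with $\corank(X)$ modulo $2$, and for $(s,t)=(-1,1)$ the two factors $(-1)^n$ cancel, \emph{provided} $\rank(\mathcal{A}/X)=\corank(X)$, which enters only in the second identity through Zaslavsky's $b$-formula applied to $\mathcal{A}/X$.

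That rank identity is the one place where your argument, as written, has a genuine defect. You justify it by appealing to ``the top element $\cap_{H\in\mathcal{A}}H$'' of the filter $\{Y\in L(\mathcal{A})\mid Y\ge X\}$, of dimension $n-\rank(\mathcal{A})$; but for a non-central (affine) arrangement --- precisely the case this theorem is meant to cover beyond the matroid setting --- $\cap_{H\in\mathcal{A}}H$ is empty, hence not in $L(\mathcal{A})$, and the filter has no top element at all. What you actually need is that every flat $X$ contains a flat of rank $\rank(\mathcal{A})$ (equivalently, all maximal elements of $L(\mathcal{A})$ have the same rank), and this is true but requires an argument. A short one: since $X=\cap_{H\in\mathcal{A}|X}H\neq\emptyset$, the normals of the hyperplanes in $\mathcal{A}|X$ span a space of dimension $\rank(X)$; extend a basis of that span, using normals of further hyperplanes of $\mathcal{A}$, to a maximal linearly independent set of normals, which has size $\rank(\mathcal{A})$ (the rank of an arrangement equals the dimension of the span of its normals, because a linear system of full row rank is consistent); the chosen hyperplanes then intersect in a nonempty flat $Y\subseteq X$ of rank $\rank(\mathcal{A})$, so $\rank(\mathcal{A}/X)=\rank(\mathcal{A})-\rank(X)=\corank(X)$. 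Note that first passing to the essentialization does not remove this point (it merely restates it as ``every flat contains a $0$-dimensional flat''). With this lemma supplied, your proof is complete; it is shorter than the paper's combinatorial argument, which in exchange gives geometric information (a face-counting interpretation of each term) rather than relying on Theorem \ref{arragement} and Zaslavsky's theorem.
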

We next give a combinatorial proof to above identities. The idea is to consider the total signs on the right side contributed to each region on the left side of the identities. If $\mathcal{A}$ is a hyperplane arrangement in $\Bbb{R}^n$, recall that $W$ is the subspace spanned by the normal vectors of all hyperplanes in $\mathcal{A}$. Consider the arrangement $\mathcal{A}_W=\{H\cap W\mid H\in \mathcal{A}\}$ in $W$ whose rank $r(\mathcal{A}_W)=\dim(W)$. It is easily seen that $r(\mathcal{A})=r(\mathcal{A}_W)$, $b(\mathcal{A})=b(\mathcal{A}_W)$, and $L(\mathcal{A})\cong L(\mathcal{A}_W)$. Assume that $\mathcal{A}$ is a hyperplane arrangement in $\Bbb{R}^n$ with rank $r(\mathcal{A})=n$. Then all relatively bounded regions in $M(\mathcal{A})$ are actually bounded and $\corank(X)=\dim(X)$ for all $X\in L(\mathcal{A})$. With the induced topology of the standard topology of $\Bbb{R}^n$, each region $\Delta\in \mathscr{R}(\mathcal{A})$ is homeomorphic to an open ball of dimension $n$. Given any $\Delta\in \mathscr{R}(\mathcal{A})$, let $\bar{\Delta}$ be its topological closure and denote by $F(\Delta)$ the collection of all \emph{faces} of $\bar{\Delta}$, which is defined to be
\[
F(\Delta)=\{\bar{\Delta}\cap M(\mathcal{A}/X)\mid X\in L(\mathcal{A}),\bar{\Delta}\cap M(\mathcal{A}/X)\neq \emptyset\}.
\]
If $\Delta\in \mathscr{B}(\mathcal{A})$ is a bounded region in $M(\mathcal{A})$, $\bar{\Delta}$ becomes a closed polytope and is homeomorphic to a closed ball. Then the Euler characteristic of $\bar{\Delta}$ is $1$, i.e.,
\begin{eqnarray}\label{F}
\sum_{f\in F(\Delta)}(-1)^{\dim(f)}=1,\sp \forall\, \Delta\in \mathscr{B}(\mathcal{A}).
\end{eqnarray}
If $\Delta\in \mathscr{R}(\mathcal{A})-\mathscr{B}(\mathcal{A})$ is an unbounded region in $M(\mathcal{A})$, denote by $F_b(\Delta)$ the collection of bounded faces of $\bar{\Delta}$. Then the space $\sqcup_{f\in F_b(\Delta)}f$ is homeomorphic to a closed ball, i.e.,
\begin{equation}\label{F_b}
\sum_{f\in F_b(\Delta)}(-1)^{\dim(f)}=1,\sp \forall\;\Delta\in \mathscr{R}(\mathcal{A})-\mathscr{B}(\mathcal{A}).
\end{equation}
On the other hand, when $\Delta\in \mathscr{R}(\mathcal{A})-\mathscr{B}(\mathcal{A})$ is an unbounded region in $M(\mathcal{A})$, $\bar{\Delta}$ is homeomorphic to a closed half space whose Euler characteristic is 0.
Then we have
\begin{eqnarray}\label{F_u}
\sum_{f\in F(\Delta)}(-1)^{\dim(f)}=0.
\end{eqnarray}
Applying (\ref{F}), (\ref{F_b}), and (\ref{F_u}), we have
\begin{eqnarray*}
r(\mathcal{A})=\sum_{\Delta\in \mathscr{B}(\mathcal{A})}\sum_{f\in F(\Delta)}(-1)^{\dim(f)}+\sum_{\Delta\in \mathscr{R}(\mathcal{A})-\mathscr{B}(\mathcal{A})}\;\sum_{f\in F_b(\Delta)}(-1)^{\dim(f)}=\sum_{\Delta\in \mathscr{R}(\mathcal{A})}\sum_{f\in F_b(\Delta)}(-1)^{\dim(f)},
\end{eqnarray*}
\begin{eqnarray*}b(\mathcal{A})=\sum_{\Delta\in \mathscr{B}(\mathcal{A})}\sum_{f\in F(\Delta)}(-1)^{\dim(f)}+\sum_{\Delta\in \mathscr{R}(\mathcal{A})-\mathscr{B}(\mathcal{A})}\;\sum_{f\in F(\Delta)}(-1)^{\dim(f)}=\sum_{\Delta\in \mathscr{R}(\mathcal{A})}\sum_{f\in F(\Delta)}(-1)^{\dim(f)}.
\end{eqnarray*}
From the definition of $F(\Delta)$, we can see that for each face $f\in F(\Delta)$, there exists a unique $X\in L(\mathcal{A})$ such that $f\subseteq X$ and $\dim(f)=\dim(X)$, i.e., $f\in \mathscr{R}(\mathcal{A}/X)$. However, for each $f\in\mathscr{R}(\mathcal{A}/X)$, there are $r(\mathcal{A}|X)$ regions $\Delta\in \mathscr{R}(\mathcal{A})$ such that $f\in F(\Delta)$. In addition, $f$ is bounded if and only if $f\in \mathscr{B}(\mathcal{A}/X)$. Then we have
\begin{eqnarray*}
r(\mathcal{A})&=&\sum_{X\in L(\mathcal{A})}(-1)^{{\small \dim(X)}}\,r(\mathcal{A}|X)\,b(\mathcal{A}/X),\\
b(\mathcal{A})&=&\sum_{X\in L(\mathcal{A})}(-1)^{{\small \dim(X)}}\,r(\mathcal{A}|X)\,r(\mathcal{A}/X).
\end{eqnarray*}
According to the assumption that $\mathcal{A}$ is essential in $\Bbb{R}^n$, we have $\corank(X)=\dim(X)$ for all $X\in L(\mathcal{A})$ which completes the proof.

\subsection{Reciprocity Theorem of Characteristic polynomials}
In this subsection, the hyperplane arrangement $\mathcal{A}$ is assumed to be \emph{integral}, i.e., each hyperplane $H\in\mathcal{A}$ is defined by an integral linear equation as follows
\begin{equation}\label{eqn-H}
H:\, a_1x_1+\cdots+a_nx_n=b,\sp b,a_i\in \Bbb{Z},\, 1\le i\le n.
\end{equation}
For any prime number $q$, the above hyperplane $H$ is automatically reduced to a hyperplane $H_q$ in $\Bbb{F}_q^n$, which is defined by the equation
\[H_q:\, a_1x_1+\cdots+a_nx_n=b\mod q,\]
called the $q$-reduction of $H$. Then $\mathcal{A}_q=\{H_q\mid H\in \mathcal{A}\}$ defines a hyperplane arrangement in the space $V=\Bbb{F}_q^n$. In this subsection, we always assume $q$ is large enough such that that the intersection semilattices $L(\mathcal{A})$ and $L(\mathcal{A}_q)$ are isomorphic, where the isomorphism is given by
\[X\mapsto X_q:=\cap_{H\in \mathcal{A}|X}H_q.\]
Similar as before, we have the following notations,
\[\mathcal{A}_q|X_q=\{H_q\mid H\in \mathcal{A}|X\},\sp \mathcal{A}_q/X_q=\{H_q\cap X_q\mid H\in \mathcal{A}/X\},\sp M(\mathcal{A}_q)=\Bbb{F}_q^n-\cup_{H\in \mathcal{A}}H_q.\]
C. A. Athanasiadis gave the following combinatorial interpretation of $\chi(\mathcal{A},q)$.
\begin{theorem}{\rm\cite{Athanasiadis}}\label{F_q}
Let $\mathcal{A}$ be an integral arrangement in $\Bbb{R}^n$ and $q$ be a large prime number. Then
\[
\chi(\mathcal{A},q)=\big|M(\mathcal{A}_q)\big|.
\]
\end{theorem}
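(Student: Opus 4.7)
The plan is to prove the identity by M\"obius inversion on the intersection lattice, applied to the point-counting function $X\mapsto |X_q|$. The hypothesis that $q$ is large enough for $L(\mathcal{A})\cong L(\mathcal{A}_q)$ will be used to guarantee both that the lattice structure is preserved and that each reduced flat has the expected $\Bbb{F}_q$-dimension.

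First I would verify that for every $X\in L(\mathcal{A})$ the reduced flat $X_q$ is an affine subspace of $\Bbb{F}_q^n$ with $|X_q|=q^{\dim(X)}$. This amounts to showing that the coefficient matrix of the integer linear system defining $X$ has the same rank modulo $q$ as over $\Bbb{Q}$, which holds provided $q$ does not divide any of the finitely many relevant subdeterminants; this is precisely part of the content of the phrase ``$q$ large''. Next, for each point $p\in\Bbb{F}_q^n$, the set of hyperplanes in $\mathcal{A}_q$ through $p$ is of the form $\mathcal{A}_q|Y_q$ for a unique $Y\in L(\mathcal{A})$, namely the one whose associated flat $Y_q$ is the smallest flat of $\mathcal{A}_q$ containing $p$. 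Let $N(Y)$ denote the set of such $p$. By construction $N(\hat{0})=M(\mathcal{A}_q)$, and for every $X\in L(\mathcal{A})$ one has the disjoint decomposition
\[
X_q=\bigsqcup_{Y\in L(\mathcal{A}),\, Y\ge X}N(Y),
\]
since a point lies in $X_q$ if and only if its minimal flat $Y_q$ is contained in $X_q$, which is exactly $Y\ge X$ in the poset order (reverse inclusion).

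Taking cardinalities yields the identity $q^{\dim(X)}=\sum_{Y\ge X}|N(Y)|$. M\"obius inversion on $L^\ast(\mathcal{A})$ then gives $|N(X)|=\sum_{Y\ge X}\mu(X,Y)\,q^{\dim(Y)}$, and specialising to $X=\hat{0}$ produces
\[
|M(\mathcal{A}_q)|=|N(\hat{0})|=\sum_{Y\in L(\mathcal{A})}\mu(\hat{0},Y)\,q^{\dim(Y)}=\chi(\mathcal{A},q),
\]
by the definition of the characteristic polynomial recalled earlier (the $\hat{1}$-term contributes nothing because $t^{\infty}=0$).

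The principal obstacle is the dimension-preservation step: making precise how large $q$ must be so that every $X_q$ attains the full dimension $\dim(X)$ and so that distinct $X,Y\in L(\mathcal{A})$ produce distinct reductions $X_q,Y_q$. Once this finite set of exceptional primes has been excluded, the remainder of the argument is purely formal M\"obius inversion on the intersection lattice and requires no further geometric input.
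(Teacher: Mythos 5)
Your argument is correct, but be aware that the paper does not prove Theorem \ref{F_q} at all: it is quoted from Athanasiadis \cite{Athanasiadis} and used as a black box, so there is no ``paper proof'' to compare against. What you have written is the standard finite-field argument (Crapo--Rota for central arrangements, Athanasiadis in the affine case): stratify $\Bbb{F}_q^n$ by the minimal flat of $\mathcal{A}_q$ containing each point, use $|X_q|=q^{\dim(X)}$, and apply M\"obius inversion. Two small points would tighten it. First, the inversion should be (and in effect is) carried out over the finite poset $L(\mathcal{A})$, since your decomposition $X_q=\bigsqcup_{Y\ge X,\,Y\in L(\mathcal{A})}N(Y)$ only involves flats of $L(\mathcal{A})$ and there is no set $N(\hat{1})$; invoking $L^\ast(\mathcal{A})$ is harmless because the M\"obius function of an interval $[X,Y]$ with $Y\in L(\mathcal{A})$ is the same in both posets, but it is cleaner to say so. Second, the dimension-preservation step you single out as the principal obstacle can be disposed of without discussing subdeterminants: the hypothesis already gives that $X\mapsto X_q$ is a semilattice isomorphism (in particular each $X_q$ is nonempty), a poset isomorphism preserves rank, and in the intersection semilattice of an affine arrangement over any field the poset rank of a flat equals its codimension, so $\dim(X_q)=\dim(X)$ and $|X_q|=q^{\dim(X)}$ follows. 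Your alternative justification---that for all but finitely many primes the relevant coefficient matrices keep their rank modulo $q$---is also valid and is indeed part of what ``$q$ large'' is meant to exclude, so either route closes the gap.
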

It follows by $(\ref{arr-conv})$ that $\chi(\mathcal{A},-q)=\sum_{X\in L(\mathcal{A})}\chi(\mathcal{A}|X,-1)\,\chi(\mathcal{A}/X,q).$ Applying Zaslavski formula $r(\mathcal{A}|X)=(-1)^n\chi(\mathcal{A}|X,-1)$, we then obtain the following result which will leads to a combinatorial interpretation to the number $\chi(\mathcal{A},-q)$ for any large prime number $q$, known as the reciprocity theorem of the characteristic polynomial.
\begin{proposition}\label{prop}
With the same assumptions as {\rm Theorem \ref{F_q}}, we have
\[
\chi(\mathcal{A},-q)=(-1)^n\sum_{X\in L(\mathcal{A})}r(\mathcal{A}|X)\,\big|M(\mathcal{A}_q/X_q)\big|.
\]
\end{proposition}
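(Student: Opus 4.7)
The plan is to follow the hint in the paragraph immediately preceding the proposition: specialize the convolution identity (\ref{arr-conv}) at $s=-1$ and $t=q$, and then substitute two already-available combinatorial interpretations of the factors, namely Zaslavsky's formula for $\chi(\mathcal{A}|X,-1)$ and Athanasiadis's Theorem \ref{F_q} for $\chi(\mathcal{A}/X,q)$.

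Concretely, I would first set $s=-1$, $t=q$ in (\ref{arr-conv}) to obtain
\[
\chi(\mathcal{A},-q)=\sum_{X\in L(\mathcal{A})}\chi(\mathcal{A}|X,-1)\,\chi(\mathcal{A}/X,q).
\]
Next, since $\mathcal{A}|X$ consists of hyperplanes of $V=\mathbb{R}^n$, Zaslavsky's formula gives $r(\mathcal{A}|X)=(-1)^n\chi(\mathcal{A}|X,-1)$, hence $\chi(\mathcal{A}|X,-1)=(-1)^n r(\mathcal{A}|X)$. Finally, I would invoke Theorem \ref{F_q} applied to the integral arrangement $\mathcal{A}/X$ in the vector space $X$: for $q$ sufficiently large, $\chi(\mathcal{A}/X,q)=|M((\mathcal{A}/X)_q)|$. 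Substituting these two replacements into the displayed sum and factoring the constant sign $(-1)^n$ out produces exactly the asserted identity.

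The only nontrivial point is verifying that Athanasiadis's theorem really applies to $\mathcal{A}/X$ with the correct $q$-reduction, that is, that $(\mathcal{A}/X)_q$ and $\mathcal{A}_q/X_q$ coincide as arrangements in $X_q$. This is where I would spend the most care: $\mathcal{A}/X$ is integral because each $H\cap X$ with $H\in\mathcal{A}-\mathcal{A}|X$ is cut out by the same integral linear equation defining $H$, restricted to the integral affine subspace $X$; the assumption that $q$ is large enough to make $L(\mathcal{A})\cong L(\mathcal{A}_q)$ via $X\mapsto X_q$ forces the same isomorphism on every interval $[X,\hat 1]$, so the reduction commutes with taking $\mathcal{A}/X$. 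Once this identification is granted, $|M((\mathcal{A}/X)_q)|=|M(\mathcal{A}_q/X_q)|$ and the proof is complete.

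I do not expect any genuine obstacle beyond bookkeeping: Theorem \ref{arragement} supplies the algebraic identity, Zaslavsky and Athanasiadis each reinterpret one factor, and the rest is a direct substitution.
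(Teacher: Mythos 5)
Your proposal is correct and follows essentially the same route as the paper: specialize (\ref{arr-conv}) at $s=-1$, $t=q$, apply Zaslavsky's formula to $\chi(\mathcal{A}|X,-1)$, and use Theorem \ref{F_q} to identify $\chi(\mathcal{A}/X,q)$ with $\big|M(\mathcal{A}_q/X_q)\big|$. Your extra care in checking that the $q$-reduction commutes with contraction, i.e.\ that $(\mathcal{A}/X)_q$ agrees with $\mathcal{A}_q/X_q$ under the assumed isomorphism $L(\mathcal{A})\cong L(\mathcal{A}_q)$, is a point the paper passes over silently, so it is a welcome addition rather than a deviation.
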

To state the combinatorial aspect of above formula, we introduce some notations first. Fixing a large prime number $q$, denote by
\[
C(n,q)=\left\{(x_1,\ldots, x_n)\in \Bbb{Z}^n\mid -\frac{q-1}{2}\le x_i\le \frac{q-1}{2},\; \forall\; 1\le i\le n\right\}
\]
the central symmetric lattice cube of size $q$ and dimension $n$. Suppose $\mathcal{A}$ is an integral hyperplane arrangement and $H\in \mathcal{A}$ is defined by the equation (\ref{eqn-H}). For any $t\in \Bbb{R}$, let $H(t)$ be a hyperplane translated from $H$, whose defining equation is
\[
H(t):\sp \, a_1x_1+\cdots+a_nx_n=b+t.
\]
For any large prime number $q$, denote by $M(\mathcal{A},q)$ the complement of the union of $H(kq)$ for all $H\in\mathcal{A}$ and $k\in \Bbb{Z}$, and $\mathscr{R}(\mathcal{A},q)$ the collection of all connected components (regions) of $M(\mathcal{A},q)$, i.e.,
\[
M(\mathcal{A},q)=\Bbb{R}^n-\cup_{H\in \mathcal{A},k\in \Bbb{Z}}H(kq),\sp \mathscr{R}(\mathcal{A},q)=\left\{\Delta\mid \Delta \text{~ is a region of ~}M(\mathcal{A},q)\right\}.
\]
Now we are ready to state the reciprocity theorem for the characteristic polynomial of hyperplane arrangements, where the proof will be given later.
\begin{theorem}\label{comb-interpre}{\rm [Reciprocity Theorem]} Let $\mathcal{A}$ be an integral arrangement with the characteristic polynomial $\chi(\mathcal{A},t)$. For any large prime $q$ and $\Delta\in\mathscr{R}(\mathcal{A},q)$, denote by $\bar{\Delta}$ the topological closure of $\Delta$ and define
\[\bar{\chi}(\mathcal{A}, q):=\sum_{\Delta\in\mathscr{R}(\mathcal{A},q)}\big|\bar{\Delta}\cap C(n,q)\big|.\]
Then  $\bar{\chi}(\mathcal{A},q)=(-1)^n\chi(\mathcal{A},-q)$. Namely, $|\chi(\mathcal{A},-q)|$ counts the total number of lattice points in $\bar{\Delta}\cap C(n,q)$ for all $\Delta\in\mathscr{R}(\mathcal{A},q)$.
\end{theorem}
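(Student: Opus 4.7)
The plan is to reduce the claim to Proposition \ref{prop} by a double counting argument. For each $p\in C(n,q)$, set
\[
m(p)=\#\{\Delta\in\mathscr{R}(\mathcal{A},q)\mid p\in\bar{\Delta}\},
\]
so that $\bar{\chi}(\mathcal{A},q)=\sum_{p\in C(n,q)}m(p)$ after swapping the order of summation in the definition of $\bar{\chi}(\mathcal{A},q)$. In view of Proposition \ref{prop}, it is enough to establish
\[
\sum_{p\in C(n,q)}m(p)=\sum_{X\in L(\mathcal{A})}r(\mathcal{A}|X)\,\big|M(\mathcal{A}_q/X_q)\big|.
\]
I would prove this by partitioning the lattice points of $C(n,q)$ according to an element $X\in L(\mathcal{A})$ that records the local shape of the periodic arrangement at $p$, and then evaluating $m(p)$ in terms of $X$.

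For the stratification, reduction modulo $q$ gives a bijection $C(n,q)\to\Bbb{F}_q^n$, and for large $q$ the isomorphism $L(\mathcal{A})\cong L(\mathcal{A}_q)$ identifies the smallest flat of $L(\mathcal{A}_q)$ containing $\bar{p}$ with some $X_q$; equivalently, $\bar{p}\in M(\mathcal{A}_q/X_q)$ for a unique $X\in L(\mathcal{A})$. Translating this back to the periodic picture, the integral translates $H(k_H q)$ passing through $p$ are precisely those with $H\in\mathcal{A}|X$, for appropriate $k_H\in\Bbb{Z}$, while no $H\notin\mathcal{A}|X$ passes through $p$ in any period. Consequently the number of $p\in C(n,q)$ with stratum $X$ equals $|M(\mathcal{A}_q/X_q)|$.

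The heart of the proof is the local identity $m(p)=r(\mathcal{A}|X)$ whenever $p$ has stratum $X$. A sufficiently small open neighborhood $U$ of $p$ meets only the periodic translates of hyperplanes in $\mathcal{A}|X$ that actually pass through $p$, so translation by $-p$ identifies $U$ with a neighborhood of the origin in the central arrangement obtained by translating the hyperplanes of $\mathcal{A}|X$ so as to pass through the origin. This central arrangement has exactly $r(\mathcal{A}|X)$ full-dimensional open cones at the origin, and each such cone lies locally in the closure of exactly one global region $\Delta\in\mathscr{R}(\mathcal{A},q)$; hence $m(p)=r(\mathcal{A}|X)$. Summing over $p$, grouping by $X$, and substituting the count from the previous paragraph gives the required identity.

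The main obstacle will be to pin down exactly what the hypothesis ``large $q$'' must guarantee. Beyond the isomorphism $L(\mathcal{A})\cong L(\mathcal{A}_q)$ already invoked in Proposition \ref{prop}, one needs that for every $p\in C(n,q)$ the only periodic translates $H(kq)$ meeting a sufficiently small neighborhood of $p$ are those that pass through $p$, and that the set of hyperplanes $H\in\mathcal{A}$ admitting such a translate through $p$ is precisely $\mathcal{A}|X$ with no accidental coincidences. Both conditions follow from explicit bounds in terms of the coefficients of the defining equations of the hyperplanes of $\mathcal{A}$, but making them rigorous is what legitimizes the linear identification of the local picture at $p$ with $\mathcal{A}|X$ and closes the argument.
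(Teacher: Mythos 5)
Your proposal is correct and follows essentially the same route as the paper's own proof: the paper likewise double counts incidences between points of $C(n,q)$ and closed regions, partitions $C(n,q)$ into the strata $M(\mathcal{A}_{(q)}/X_{(q)})$ (in bijection with $M(\mathcal{A}_q/X_q)$ via reduction mod $q$), shows the incidence count at a point of stratum $X$ equals $r(\mathcal{A}|X)$, and concludes by Proposition \ref{prop}. The only difference is how that incidence count is justified: you argue locally in a small ball around $p$ using the cones of the central arrangement of translates through $p$ plus translation invariance, while the paper gets the same fact from a global bijection (Lemma \ref{A(x)}) together with Lemma \ref{translation} --- a cosmetic rather than substantive distinction.
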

It should be noted that some other reciprocity theorems concerning characteristic polynomials of hyperplane arrangements have been studied. Stanley \cite{Stanley3} introduced a reciprocity law for the chromatic polynomials of graphs. Athanasiadis \cite{Athanasiadis2010} found a reciprocity law for the characteristic polynomial of a deformed linear arrangement, whose specialization on $m=1$ provides a different interpretation to $\chi(\mathcal{A},-q)$. As a direct consequence of standard Ehrhart theory, Beck and Zaslavski \cite{Beck} generalized the reciprocity law of Ehrhart quasi-polynomials to a convex polytope dissected by a hyperplane arrangement. We shall see how Theorem \ref{comb-interpre} is related to Beck and Zaslavsky's reciprocity theorem, and then connected to the Ehrhart theory in the next paragraph.

Assume that $\mathcal{A}$ is a linear arrangement, i.e., the defining equations of all hyperplanes in $\mathcal{A}$ are homogeneous. With this assumption, denote by $\mathcal{A}^\ast=\{H(k)\mid H\in \mathcal{A}, k\in \Bbb{Z}\}$ the deformed arrangement of $\mathcal{A}$. Let $P=(-\frac{1}{2}, \frac{1}{2})^n\subseteq \Bbb{R}^n$. Then $P\setminus \bigcup_{H\in \mathcal{A}, k\in \Bbb{Z}}H(k)$ consists of finite many open rational polytopes, denoted by $R_1,\ldots, R_l$. Let
\[
E_{P,\mathcal{A}}(q)=\sum_{i=1}^{l}|qR_i\cap \Bbb{Z}^n|, \sp  \bar{E}_{P,\mathcal{A}}(q)=\sum_{i=1}^{l}|q\bar{R_i}\cap \Bbb{Z}^n|.
\]
Note that for all $x\in \Bbb{Z}^n$, $q^{-1}x\in H(k)\Leftrightarrow x\in H(kq)$, and $q^{-1}x\in P \Leftrightarrow x\in C(n,q)$. It follows that
\[E_{P,\mathcal{A}}(q)=\sum_{\Delta\in\mathscr{R}(\mathcal{A},q)}\big|\Delta\cap C(n,q)\big|, \sp \bar{E}_{P,\mathcal{A}}(q)=\sum_{\Delta\in\mathscr{R}(\mathcal{A},q)}\big|\bar{\Delta}\cap C(n,q)\big|\bar{\chi}(\mathcal{A},q).\]
From Theorem \ref{F_q}, it is easily seen that $E_{P,\mathcal{A}}(q)=\chi(\mathcal{A},q)$. Beck and Zaslavsky's reciprocity theorem \cite{Beck} states that $\bar{E}_{P,\mathcal{A}}(q)=(-1)^nE_{P,\mathcal{A}}(-q)$. It follows that $\bar{\chi}(\mathcal{A},q)=(-1)^n\chi(\mathcal{A},q)$, which completes the proof of Theorem \ref{comb-interpre} in the case that $\mathcal{A}$ is linear. We know that Beck and Zaslavsky's reciprocity theorem is a direct consequence of Ehrhart theory. So Theorem \ref{comb-interpre} can be viewed as an easy application of Ehrhart theory when the arrangement is linear. However, if the hyperplane arrangement is not linear, we can not find a way at the moment to interpret Theorem \ref{comb-interpre} as a consequence of Ehrhart theory or Beck and Zaslavsky's reciprocity theorem. We use an easy example to show the reasons. Let the arrangement $\mathcal{A}$ in $\Bbb{R}^2$  consist of three hyperplanes, $H_1: x=0, H_2: y=0,$ and $H_3: x+y=1$. Then the polytope $\Delta\in \mathscr{R}(\mathcal{A},q)$ bounded by $H_1, H_2,$ and $H_3$ is not dilated as $q$ changes. So Ehrhart theory can not be applied to $\Delta$.

We next prove Theorem \ref{comb-interpre}, without the hypothesis of linearity, by applying the convolution formula in Proposition \ref{prop}. For $\bm{x}\in \Bbb{R}^n$, use $\mathcal{A}_{\bm x}$ to denote the collection of all possible hyperplanes $H(kq)$ who pass through $\bm{x}$, i.e.,
\[\mathcal{A}_{\bm x}=\{H(kq)\mid H\in \mathcal{A}, k\in \Bbb{Z}, {\bm x}\in H(kq)\}.\]
Then $\mathcal{A}_{\bm x}$ is a central hyperplane arrangement in $\Bbb{R}^n$. Recall that $\mathscr{R}(\mathcal{A})$ denotes the collection of regions in $\Bbb{R}^n$ separated by all hyperplanes $H\in \mathcal{A}$, and $r(\mathcal{A})=\big|\mathscr{R}(\mathcal{A})\big|$.
\begin{lemma}\label{A(x)} With previous notations, we have
\[\#\{\Delta\in \mathscr{R}(\mathcal{A},q)\mid {\bm x}\in \bar{\Delta}\}=r(\mathcal{A}_{\bm x}), \sp \forall\, {\bm x}\in \Bbb{R}^n.\]
\end{lemma}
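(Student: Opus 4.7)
The plan is to localize the problem at $\bm x$ and reduce it to the standard fact that, for a finite arrangement in $\Bbb R^n$, the regions whose closures contain a fixed point are in bijection with the regions of the central subarrangement of hyperplanes passing through that point. The subtlety is that the deformed family $\{H(kq)\mid H\in\mathcal A,\,k\in\Bbb Z\}$ is infinite, so the first task is to pass to a small neighborhood of $\bm x$.

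To do so, I would choose $\epsilon>0$ small enough that the only hyperplanes of the form $H(kq)$ meeting the open ball $B_\epsilon(\bm x)$ are precisely those in $\mathcal A_{\bm x}$. This is possible because $\mathcal A$ is finite and, for each $H\in\mathcal A$, the family $\{H(kq)\mid k\in\Bbb Z\}$ of parallel translates is discrete, so only translates already passing through $\bm x$ can accumulate at $\bm x$. With this choice, $B_\epsilon(\bm x)\cap M(\mathcal A,q)$ coincides with $B_\epsilon(\bm x)\setminus\bigcup_{H'\in\mathcal A_{\bm x}}H'$, which is the disjoint union of the connected pieces $C\cap B_\epsilon(\bm x)$ as $C$ ranges over the $r(\mathcal A_{\bm x})$ open cones in $\mathscr R(\mathcal A_{\bm x})$.

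I would then define a map $\Phi$ on $\{\Delta\in\mathscr R(\mathcal A,q)\mid\bm x\in\bar\Delta\}$ by sending $\Delta$ to the cone $C\in\mathscr R(\mathcal A_{\bm x})$ that contains $\Delta\cap B_\epsilon(\bm x)$. The single tool I need is the separation principle: two points in distinct cones of $\mathcal A_{\bm x}$ are separated by a hyperplane $H'\in\mathcal A_{\bm x}\subseteq\{H(kq)\}$ that has been removed from $M(\mathcal A,q)$, so no path inside a single connected component of $M(\mathcal A,q)$ can bridge two different cones. This yields at once (i) well-definedness of $\Phi$, since $\Delta\cap B_\epsilon(\bm x)$ cannot meet two cones; (ii) injectivity, since if $\Phi(\Delta_1)=\Phi(\Delta_2)=C$ then both $\Delta_i$ meet the connected set $C\cap B_\epsilon(\bm x)\subseteq M(\mathcal A,q)$, forcing $\Delta_1=\Delta_2$; and (iii) surjectivity, since for every $C\in\mathscr R(\mathcal A_{\bm x})$ the set $C\cap B_\epsilon(\bm x)$ lies in some region $\Delta$, and $\bm x\in\bar C\subseteq\bar\Delta$. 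Counting then gives the lemma. The only real obstacle is the local-finiteness reduction in the first step; once that is in place, the rest is standard convexity and connectedness.
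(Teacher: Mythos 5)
Your argument is correct, and in substance it produces the same bijection as the paper: you send $\Delta$ to the unique region of the central arrangement $\mathcal{A}_{\bm x}$ that contains it (your $\Phi(\Delta)$, defined via $\Delta\cap B_\epsilon(\bm x)$, is exactly this region, since $\Delta$ is connected and avoids every hyperplane of $\mathcal{A}_{\bm x}$). Where you genuinely differ is in the verification. The paper argues globally: it uses $M(\mathcal{A},q)\subseteq M(\mathcal{A}_{\bm x})$ to define $\psi(\Delta)=R_\Delta$, proves surjectivity by showing that the closures of the regions of $M(\mathcal{A},q)$ cover $\Bbb{R}^n$, so that each open $R\in\mathscr{R}(\mathcal{A}_{\bm x})$ satisfies $\bar{R}=\cup_{\Delta\subseteq R}\bar{\Delta}$ and hence contains some $\Delta$ with ${\bm x}\in\bar{\Delta}$, and proves injectivity with a separating-hyperplane argument through $\bm x$. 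You instead localize: the local-finiteness of the family $\{H(kq)\}$ gives a ball $B_\epsilon(\bm x)$ in which $M(\mathcal{A},q)$ looks exactly like the complement of $\mathcal{A}_{\bm x}$, and then injectivity follows at once because $C\cap B_\epsilon(\bm x)$ is a connected subset of $M(\mathcal{A},q)$, while surjectivity follows because $C\cap B_\epsilon(\bm x)$ sits inside a single region $\Delta$. This buys you a cleaner injectivity step (no explicit separating hyperplane between $\Delta_1$ and $\Delta_2$) and a surjectivity step that avoids the covering identity for closures, at the cost of the preliminary $\epsilon$-lemma, which you justify correctly. One small imprecision: in the surjectivity step the inclusion $\bar{C}\subseteq\bar{\Delta}$ is not literally true (the cone $C$ can be much larger than $\Delta$); what you need, and what your setup gives, is ${\bm x}\in\overline{C\cap B_\epsilon(\bm x)}\subseteq\bar{\Delta}$, which holds because $C$ is a region of a central arrangement with all hyperplanes through $\bm x$, so the segment from $\bm x$ to any point of $C$ lies in $C$ and $C$ has points arbitrarily close to $\bm x$. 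With that sentence repaired, the proof is complete.
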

\begin{proof}
We shall prove it by constructing a bijection $\psi:\{\Delta\in \mathscr{R}(\mathcal{A},q)\mid {\bm x}\in \bar{\Delta}\}\to \mathscr{R}(\mathcal{A}_{\bm x})$ for any ${\bm x}\in \Bbb{R}^n$. It is obvious that $M(\mathcal{A},q)\subseteq M(\mathcal{A}_{\bm x})$. Then for all $\Delta \in \mathscr{R}(\mathcal{A},q)$ and $R\in \mathscr{R}(\mathcal{A}_{\bm x})$, we have either $\Delta \in R$ or $\Delta \cap R=\emptyset$, since $\Delta$ and $R$ are connected components of $M(\mathcal{A},q)$ and $M(\mathcal{A}_{\bm x})$ respectively. According to $\sqcup_{\Delta\in\mathscr{R}(\mathcal{A},q)}\Delta=M(\mathcal{A},q)\subseteq M(\mathcal{A}_{\bm x})=\sqcup_{R\in\mathscr{R}(\mathcal{A}_{\bm x})}R$, we can conclude that each $\Delta\in \mathscr{R}(\mathcal{A},q)$ is contained in a unique region $R\in \mathscr{R}(\mathcal{A}_{\bm x})$, denoted $R_\Delta$. It defines the map $\psi:\Delta\mapsto R_\Delta$. To prove $\psi$ is surjective, consider the set $\Sigma_R=\{\Delta\in \mathscr{R}(\mathcal{A},q)\mid \Delta\subseteq R\}$ for any $R\in \mathscr{R}(\mathcal{A}_{\bm x})$. Note that $R\cap \Delta=\emptyset$ for all $\Delta\in\mathscr{R}(\mathcal{A},q)-\Sigma_R$, and $R$ is an open set. Thus $R\cap \big(\cup_{\Delta\in\mathscr{R}(\mathcal{A},q)-\Sigma_R}\bar{\Delta}\big)=\emptyset$. Since $\cup_{\Delta\in\mathscr{R}(\mathcal{A},q)}\bar{\Delta}=\Bbb{R}^n$, we then obtain
\[
R-\cup_{\Delta\in \Sigma_R}\bar{\Delta}=R-\cup_{\Delta\in\mathscr{R}(\mathcal{A},q)}\bar{\Delta}=\emptyset.
\]
It implies that $\bar{R}=\cup_{\Delta\in\mathscr{R}(\mathcal{A},q)}\bar{\Delta}$. On the other hand, we obviously have ${\bm x}\in \bar{R}$ for $R\in \mathscr{R}(\mathcal{A}_{\bm x})$. Thus ${\bm x}\in \bar{\Delta}$ for some $\Delta\in \Sigma_R$. Namely, each region $R\in \mathscr{R}(\mathcal{A}_{\bm x})$ contains a region $\Delta\in \mathscr{R}(\mathcal{A},q)$ such that ${\bm x}\in \bar{\Delta}$, which proves the surjection of $\psi$. To show that $\psi$ is injective, suppose that we have $\Delta_1, \Delta_2\in \{\Delta\in \mathscr{R}(\mathcal{A},q)\mid {\bm x}\in \bar{\Delta}\}$ and $\Delta_1\neq \Delta_2$ such that $R=R_{\Delta_1}=R_{\Delta_2}$. Let $H(kq)$ be a separating hyperplane of $\Delta_1$ and $\Delta_2$, i.e., $\Delta_1\subseteq H(kq)^+$ and $\Delta_2\subseteq H(kq)^-$ \big(or, $\Delta_1\subseteq H(kq)^-$ and $\Delta_2\subseteq H(kq)^+$\big), where $H^+$ and $H^-$ are two closed half space of $\Bbb{R}^n$ divided by $H$. Then we have ${\bm x}\in \bar{\Delta}_1\cap \bar{\Delta}_2\subseteq  H(kq)^+\cap H(kq)^-=H(kq)$. It implies that $H(kq)\in \mathcal{A}_{\bm x}$. From the assumption $R\in \mathscr{R}(\mathcal{A}_{\bm x})$, we then have $R\cap H(kq)=\emptyset$. Notice that $R$ is connected. So $R\cap H(kq)^+=\emptyset$ or $R\cap H(kq)^-=\emptyset$. This contradicts to $\Delta_1, \Delta_2\subseteq R$ and $\Delta_1\subseteq H(kq)^+, \Delta_2\subseteq H(kq)^-$, which completes the proof.
\end{proof}
Let $\mathcal{A}$ and  $\mathcal{B}$ be two hyperplane arrangements in $\Bbb{R}^n$. We call $\mathcal{B}$ a \emph{translation} of $\mathcal{A}$ if there is a $t_H\in \Bbb{R}$ for each $H\in \mathcal{A}$ such that $\mathcal{B}=\{H(t_H)\mid H\in \mathcal{A}\}$.
\begin{lemma}\label{translation}
Suppose $\mathcal{A}$ and $\mathcal{B}$ are central hyperplane arrangements. If $\mathcal{B}$ is a translation of $\mathcal{A}$, then $r(\mathcal{B})=r(\mathcal{A})$.
\end{lemma}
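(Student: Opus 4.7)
The plan is to reduce the per-hyperplane translations defining $\mathcal{B}$ to a single global translation of $\Bbb{R}^n$. Centrality of $\mathcal{B}$ makes this possible, because the translated hyperplanes $H(t_H)$ must then share a common point.

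First I pick a point $\bm{p} \in \bigcap_{H \in \mathcal{A}} H(t_H)$, which exists because $\mathcal{B}$ is central. Since $\mathcal{A}$ is central, each $H \in \mathcal{A}$ has a homogeneous defining equation $a_1 x_1 + \cdots + a_n x_n = 0$, so $H(t_H)$ has equation $a_1 x_1 + \cdots + a_n x_n = t_H$, and from $\bm{p} \in H(t_H)$ I read off $t_H = a_1 p_1 + \cdots + a_n p_n$. A direct computation then yields $H + \bm{p} := \{x + \bm{p} \mid x \in H\} = H(t_H)$ for every $H \in \mathcal{A}$, so $\mathcal{B} = \mathcal{A} + \bm{p}$ is nothing but the global translate of $\mathcal{A}$ by $\bm{p}$.

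Finally, since $x \mapsto x + \bm{p}$ is a homeomorphism of $\Bbb{R}^n$ carrying $M(\mathcal{A})$ bijectively onto $M(\mathcal{B})$, the connected components correspond one-to-one, giving $r(\mathcal{B}) = r(\mathcal{A})$. The only conceptual point to watch is that translating each hyperplane individually by its own scalar $t_H$ is in general a different operation from translating the ambient space by a single vector; the lemma holds precisely because the centrality of $\mathcal{B}$ forces the scalars $t_H$ to be simultaneously produced by pairing the normals of $\mathcal{A}$ against a common point $\bm{p}$, at which point the two operations coincide and the topological invariance of region counts finishes the argument.
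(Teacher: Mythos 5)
Your overall strategy is exactly the paper's: realize $\mathcal{B}$ as the image of $\mathcal{A}$ under a single global translation of $\Bbb{R}^n$, which is a homeomorphism carrying $M(\mathcal{A})$ onto $M(\mathcal{B})$ and hence matches up regions. However, there is one step that does not survive the generality in which the lemma is actually used. You read ``$\mathcal{A}$ is central'' as ``every $H\in\mathcal{A}$ has a homogeneous defining equation,'' i.e.\ all hyperplanes pass through the origin. In this paper centrality only means that $\bigcap_{H\in\mathcal{A}}H\neq\emptyset$: in the proof of the reciprocity theorem the lemma is applied with $\mathcal{A}=\mathcal{A}|X$ and $\mathcal{B}=\mathcal{A}_{\bm x}$, whose hyperplanes share the points of $X$ (resp.\ pass through $\bm x$) but in general do not contain the origin, since the ambient arrangement is affine. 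Under that weaker hypothesis your computation $H+\bm{p}=H(t_H)$ fails: if $H$ is $a\cdot x=b$ with $b\neq 0$, then $\bm p\in H(t_H)$ gives $t_H=a\cdot\bm p-b$, so $H(t_H)$ is $a\cdot x=a\cdot\bm p$, whereas $H+\bm p$ is $a\cdot x=b+a\cdot\bm p$; these agree only when $b=0$.

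The repair is small and turns your argument into the paper's proof verbatim: use centrality of \emph{both} arrangements, pick $\bm a\in\bigcap_{H\in\mathcal{A}}H$ and $\bm p\in\bigcap_{H\in\mathcal{A}}H(t_H)$, and translate by $\bm p-\bm a$. Then $\tau(H)=H+(\bm p-\bm a)$ is the unique hyperplane parallel to $H$ through $\bm p$, and since $H(t_H)$ is also parallel to $H$ and passes through $\bm p$, one gets $\tau(H)=H(t_H)$ for every $H$, so $\tau$ carries $\mathcal{A}$ onto $\mathcal{B}$ and the homeomorphism argument you give finishes the proof. So the idea is right, but as written the proof only covers the case where $\mathcal{A}$ is a linear (through-the-origin) arrangement, which is not enough for the lemma's role in the paper.
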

\begin{proof}Since $\mathcal{A}$ and $\mathcal{B}$ are central, take ${\bm a}\in\cap_{H\in \mathcal{A}}H$ and ${\bm b}\in\cap_{H\in \mathcal{B}}H$. Consider the translation $\tau:{\bm x}\mapsto {\bm x}+{\bm b}-{\bm a}$ of $\Bbb{R}^n$. Since $\mathcal{B}$ is a translation of $\mathcal{A}$, then $\tau$ defines a bijection between $\mathcal{A}$ and $\mathcal{B}$, as well as a bijection between $L(\mathcal{A})$ and $L(\mathcal{B})$. So $r(\mathcal{B})=r(\mathcal{A})$.
\end{proof}
Given $H\in \mathcal{A}$, let $H_{(q)}$ be a subset of $C(n,q)$ defined by
\[H_{(q)}=C(n,q)\cap \big(\cup_{k\in \Bbb{Z}}H(kq)\big).\]
Then $\mathcal{A}_{(q)}=\{H_{(q)}\mid H\in \mathcal{A}\}$ forms an arrangement of sets in $C(n,q)$. Similar as before, we have the notations $L(\mathcal{A}_{(q)})$, $M(\mathcal{A}_{(q)})$, $\mathcal{A}_{(q)}|X_{(q)}$, $\mathcal{A}_{(q)}/ X_{(q)}$ for $X_{(q)}\in L(\mathcal{A}_{(q)})$, i.e.,
\begin{eqnarray*}
&&L(\mathcal{A}_{(q)})=\{\cap_{H\in \mathcal{B}}H_{(q)}\mid \mathcal{B}\subseteq \mathcal{A}\},\sp\sp\sp M(\mathcal{A}_{(q)})=C(n,q)-\cup_{H\in \mathcal{A}}H{(q)},\\
&&\mathcal{A}_{(q)}|X_{(q)}=\{H_{(q)}\mid H\in \mathcal{A}, X\subseteq H\}, \sp \mathcal{A}_{(q)}/ X_{(q)}=\{H_{(q)}\cap X_{(q)}\mid H\notin \mathcal{A}\ X_{(q)}\}.
\end{eqnarray*}
Consider the map
\[\rho: C(n,q)\to \Bbb{F}_q,\sp {\bm x}\mapsto {\bm x}\mod q.\]
It is obvious that $\rho$ is a bijection. Moreover, for any $\mathcal{B}\subseteq \mathcal{A}$, we have $\rho\big(\cap_{H\in \mathcal{B}}H_{(q)}\big)=\cap_{H\in \mathcal{B}}H_{q}$. So $\rho$ automatically induces an isomorphism of $L(\mathcal{A}_{(q)})$ and $L(\mathcal{A}_q)$ with $\rho(X_q)=X_{(q)}$. It is easy to see that $\big|X_q\big|=\big|X_{(q)}\big|$, and then $\big|M(\mathcal{A}_q)\big|=\big|M(\mathcal{A}_{(q)})\big|$. \\

\emph{Proof of Theorem {\rm \ref{comb-interpre}:}} Since that  
\[
M\big(\mathcal{A}_{(q)}/X_{(q)}\big)=\cap_{H\in \mathcal{A}\mid X}H_{(q)}-\cap_{H\notin \mathcal{A}\mid X}H_{(q)},
\]
then we have $C(n,q)=\sqcup_{X\in L(\mathcal{A})}M\big(\mathcal{A}_{(q)}/X_{(q)}\big)$. 
For any ${\bm x}\in M\big(\mathcal{A}_{(q)}/X_{(q)}\big)$, we can see that $\mathcal{A}_{\bm x}$ is a translation of $\mathcal{A}|X$. Since both $\mathcal{A}_{\bm x}$ and $\mathcal{A}|X$ are central, it follows by Lemma \ref{translation} that $r(\mathcal{A}_{\bm x})=r(\mathcal{A}|X)$. Applying Lemma \ref{A(x)}, we have
\begin{eqnarray*}
\sum_{\Delta\in \mathscr{R}(\mathcal{A},q)}\big|\bar{\Delta}\cap C(n,q)\big|=\sum_{{\bm x}\in C(n,q)}\#\{\Delta\mid {\bm x}\in \bar{\Delta}\}=\sum_{{\bm x}\in C(n,q)}r(\mathcal{A}_{\bm x})=\sum_{X\in L(\mathcal{A})}r(\mathcal{A}|X)\big|M\big(\mathcal{A}_{(q)}/X_{(q)}\big)\big|.
\end{eqnarray*}
Since $\big|M\big(\mathcal{A}_{(q)}/X_{(q)}\big)\big|=\big|M\big(\mathcal{A}_q/X_q\big)\big|$, it follows that 
\[\bar{\chi}(\mathcal{A},q)=\sum_{X\in L(\mathcal{A})}r(\mathcal{A}|X)\big|M\big(\mathcal{A}_q/X_q\big)\big|,\]
which completes the proof by Proposition \ref{prop}.

\section{Convolution Formulae on Tutte Polynomials}
In this section, we shall apply Theorem \ref{Mobius-conj} to formulate those convolution identities mentioned in \cite{Joseph2010,Staton1999}. Let $M$ be a matroid with the ground set $E$ and the rank function $r_{\tiny M}$. For simplicity, write $r_M(E)=r(M)$ for the rank of the matriod $M$. The rank generating function $R_M(x,y)$ and the Tutte polynomial $T_M(x,y)$ of $M$ are defined by
\[
R_M(x,y)=\sum_{A\subseteq E} x^{r(M)-r_M(A)}\, y^{|A|-r_M(A)},\sp T_M(x,y) = R_M(x - 1,y - 1).
\]
If $S$ is a subset of $E$, the restriction of $M$ to $S$, written as $M|S$, is the matroid on the ground set $S$ whose rank function is $r_{M|S}(A)=r_M(A)$ for all $A\subseteq S$. If $T$ is a subset of $E$, the contraction of $M$ by $T$, written as $M/T$, is the matroid on the ground set $E-T$ whose rank function is $r_{M/T}(A) = r_M(A \cup T)-r_M(T)$ for all $A\subseteq E-T$. With these definitions, we have
\begin{eqnarray*}
R_{M|S}(x,y)&=&\sum_{A\subseteq S}x^{r(M|S)-r_M(A)}\,y^{|A|-r_M(A)};\\
R_{M/T}(x,y)&=&\sum_{T\subseteq A\subseteq E}x^{r(M)-r_M(A)}\,y^{|A|-|T|-r_M(A)+r_M(T)}.
\end{eqnarray*}
To write the Tutte polynomial as the M\"{o}bius conjugation, consider the poset $(2^E,\subseteq)$ whose M\"{o}bius function is given by $\mu(A,B)=(-1)^{|A-B|}$ for all $A\subseteq B\subseteq E$. Let
\[
f(A)=(-x)^{r(M)-r_M(A)},\sp g(A)=(-y)^{|A|-r_M(A)},\sp \forall \; A\subseteq E.
\]
Then we have
\begin{eqnarray*}
\mu^\ast(fg)(\emptyset,E)=\sum_{A\subseteq E}\mu(\emptyset,A)\,(-x)^{r(M)-r_M(A)}\,(-y)^{|A|-r_M(A)}=(-1)^{r(M)}R_M(x,y).
\end{eqnarray*}
Similarly, we can obtain
\begin{eqnarray*}
(-1)^{r(M|S)}R_{M|S}(-1,y)&=&\mu^\ast(g)(\emptyset, S),\\
(-1)^{r(M/T)}R_{M/T}(x,-1)&=&\mu^\ast(f)(T,E).
\end{eqnarray*}
where the poset $(2^{E-T},\subseteq)$ for the last identity is identified with the interval $[T,E]$ of the poset $(2^E,\subseteq)$. Since $r(M)=r(M|A)+r(M/A)$ for all $S\subseteq E$ and $\mu^\ast(fg)=\mu^\ast(g)\ast\mu^\ast(f)$ by Theorem \ref{Mobius-conj}, we obtain
\[R_M(x,y)=\sum_{A\subseteq E}R_{M|A}(-1,y)\,R_{M/A}(x,-1).\]
This is equivalent to the main result obtained by W. Kook, V. Reiner, and D. Stanton \cite{Staton1999}.
\begin{theorem}{\rm \cite{Staton1999}} The Tutte polynomial $T_M(x,y)$ satisfies that
\[T_M(x,y)=\sum_{A\subseteq E}T_{M|A}(0,y)\,T_{M/A}(x,0).\]
\end{theorem}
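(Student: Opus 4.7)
The plan is a one-step change of variables from the rank-generating-function identity established in the paragraph immediately preceding the theorem statement. There, applying Theorem \ref{Mobius-conj} to the Boolean lattice $(2^E,\subseteq)$ with $f(A)=(-x)^{r(M)-r_M(A)}$ and $g(A)=(-y)^{|A|-r_M(A)}$, and using the rank identity $r(M)=r(M|A)+r(M/A)$ to cancel signs, yielded
\[
R_M(x,y)=\sum_{A\subseteq E}R_{M|A}(-1,y)\,R_{M/A}(x,-1).
\]
To derive the Tutte polynomial version, I would simply substitute $x\mapsto x-1$ and $y\mapsto y-1$ throughout this formula and interpret each factor via the defining relation $T_N(u,v)=R_N(u-1,v-1)$ applied to $N=M$, $N=M|A$, and $N=M/A$.

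Carrying this out, the left side becomes $R_M(x-1,y-1)=T_M(x,y)$. On the right side, the factors $R_{M|A}(-1,y)$ and $R_{M/A}(x,-1)$ become $R_{M|A}(-1,y-1)=T_{M|A}(0,y)$ and $R_{M/A}(x-1,-1)=T_{M/A}(x,0)$ respectively, since in each factor only the non-constant variable is shifted. Summing over $A\subseteq E$ produces the claimed identity. There is no genuine obstacle at this stage: the substantive work lies entirely in the M\"obius-conjugation computation preceding the theorem statement, and the present step is a purely mechanical translation between the rank-generating-function and Tutte-polynomial conventions. The only point worth double-checking is that the convolution in Theorem \ref{Mobius-conj} is taken in the order $\mu^\ast(fg)=\mu^\ast(g)\ast\mu^\ast(f)$, so that the intermediate element $A$ indexing the sum simultaneously parametrizes the restriction $M|A$ (via the interval $[\emptyset,A]$) and the contraction $M/A$ (via the interval $[A,E]$).
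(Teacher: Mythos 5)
Your proposal is correct and follows exactly the paper's route: the paper derives the identity $R_M(x,y)=\sum_{A\subseteq E}R_{M|A}(-1,y)\,R_{M/A}(x,-1)$ via M\"obius conjugation on $(2^E,\subseteq)$ and then simply declares it equivalent to the stated theorem, while you make that equivalence explicit through the substitution $x\mapsto x-1$, $y\mapsto y-1$ and the relation $T_N(u,v)=R_N(u-1,v-1)$. The translation is carried out correctly (including the sign cancellation via $r(M)=r(M|A)+r(M/A)$ and the ordering $\mu^\ast(fg)=\mu^\ast(g)\ast\mu^\ast(f)$, which is harmless since $R^P$ is commutative), so there is nothing to add.
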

Similar method can be applied to obtain the convolution identities of the subset-corank polynomial $SC_M({\bm x},\lambda)$ defined in \cite{Joseph2010}. Denote by $x_e$ the indeterminate indexed by $e\in E$. Given $A\subseteq E$, write $x_A$ for the monomial $\Pi_{e\in A}x_e$ and ${\bm x}$ for the collection of $x_e$ for all $e\in E$. The subset-corank polynomial $SC_M({\bm x},\lambda)$ is defined to be
\[
SC_M({\bm x},\lambda)=\sum_{A\subseteq E}x_A\,\lambda^{r(M)-r_M(A)}.
\]
From the definition, for any $S, T\subseteq E$, we have
\begin{eqnarray*}
SC_{M|S}({\bm x},\lambda)&=&\sum_{A\subseteq S}x_A\,\lambda^{r(M|S)-r_M(A)},\\
SC_{M/T}({\bm x},\lambda)&=&\sum_{T\subseteq A\subseteq E}x_{A-T}\,\lambda^{r(M)-r_M(A)}.
\end{eqnarray*}
Take $(2^E,\subseteq)$ to be the poset and let
\[
f(A)=(-x)_A=(-1)^{|A|}x_A, \sp g(A)=\lambda^{r(M)-r_M(A)},\sp \forall\; A\subseteq E.
\]
Then we can easily obtain
\[\mu^\ast(fg)(\emptyset, E)=SC_M({\bm x},\lambda).\]
On the other hand, for any $S,T\subseteq E$, we have
\begin{eqnarray*}
\mu^\ast(f)(\emptyset, S)&=&\sum_{A\subseteq S}x_A=(x+1)_A,\\
\mu^\ast(g)(T, E)&=&\sum_{T\subseteq A\subseteq E}(-1)^{|A-T|}\,\lambda^{r(M)-r_M(A)}=SC_{M/T}(\bm{-1},\lambda).
\end{eqnarray*}
So we have the following formula which is the identity 5 obtained in \cite{Joseph2010}.
\begin{theorem}{\rm \cite{Joseph2010}} The subset-corank polynomial $SC_M({\bm x},\lambda)$ satisfies
\[
SC_M({\bm x},\lambda)=\sum_{A\subseteq E}(x+1)_A \,SC_{M/T}(\bf{-1},\lambda).
\]
\end{theorem}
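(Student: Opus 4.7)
The plan is a direct invocation of Theorem \ref{Mobius-conj} applied to the poset $(2^E,\subseteq)$ with the functions $f(A)=(-x)_A$ and $g(A)=\lambda^{r(M)-r_M(A)}$ already introduced in the paragraphs preceding the theorem. Since $R^P$ is commutative under pointwise multiplication, Theorem \ref{Mobius-conj} yields the equality of incidence functions $\mu^\ast(fg)=\mu^\ast(f)\ast \mu^\ast(g)$, and the entire proof reduces to evaluating both sides of this identity on the interval $(\emptyset,E)$.

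First I would verify the three identifications stated in the preceding discussion. For (i) $\mu^\ast(fg)(\emptyset,E)=SC_M(\bm{x},\lambda)$, the $(-1)^{|A|}$ coming from $\mu(\emptyset,A)$ cancels the $(-1)^{|A|}$ in $(-x)_A$, leaving exactly $\sum_{A\subseteq E} x_A \lambda^{r(M)-r_M(A)}$. For (ii) $\mu^\ast(f)(\emptyset,A)=(x+1)_A$, the same sign cancellation produces $\sum_{B\subseteq A} x_B$, which equals $\prod_{e\in A}(x_e+1)=(x+1)_A$. For (iii) $\mu^\ast(g)(A,E)=SC_{M/A}(\bm{-1},\lambda)$, use the identification of $[A,E]\subseteq (2^E,\subseteq)$ with $(2^{E-A},\subseteq)$ already noted in the text, and compare against the explicit formula $SC_{M/A}(\bm{x},\lambda)=\sum_{A\subseteq B\subseteq E} x_{B-A}\lambda^{r(M)-r_M(B)}$ evaluated at $\bm{x}=\bm{-1}$.

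Once these three identifications are in place, writing out the convolution product at $(\emptyset,E)$ gives
\[
SC_M(\bm{x},\lambda) = \mu^\ast(fg)(\emptyset,E) = \sum_{A\subseteq E} \mu^\ast(f)(\emptyset,A)\,\mu^\ast(g)(A,E) = \sum_{A\subseteq E} (x+1)_A\, SC_{M/A}(\bm{-1},\lambda),
\]
which is the stated identity (with the natural reading that the summation index and the subscript match, i.e., the $T$ in the displayed formula denotes the same set as the index $A$). There is no genuine obstacle: the three auxiliary evaluations are each one-line sign-tracking computations, and all of the algebraic content of the result has already been absorbed into Theorem \ref{Mobius-conj}. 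The only point that deserves care is the reindexing in (iii), where one must check that the M\"obius function of the interval $[A,E]$ really is $(B\mapsto(-1)^{|B-A|})$ under the identification with $2^{E-A}$, which is immediate from the formula $\mu(A,B)=(-1)^{|B-A|}$ in the Boolean poset.
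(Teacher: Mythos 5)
Your proposal is correct and follows essentially the same route as the paper: the same poset $(2^E,\subseteq)$, the same functions $f(A)=(-x)_A$ and $g(A)=\lambda^{r(M)-r_M(A)}$, the same three evaluations $\mu^\ast(fg)(\emptyset,E)=SC_M(\bm{x},\lambda)$, $\mu^\ast(f)(\emptyset,A)=(x+1)_A$, $\mu^\ast(g)(A,E)=SC_{M/A}(\bm{-1},\lambda)$, and the same application of Theorem \ref{Mobius-conj} at the interval $(\emptyset,E)$. Your remark that the $T$ in the displayed formula should be read as the summation index $A$ correctly resolves what is only a notational slip in the stated identity.
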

Similarly, the identity 1 in \cite{Joseph2010} can be easily obtained in this way. Let
\[
f(A)=x_A\,\lambda^{r(M)-r_M(A)},\sp g(A)=(-y)_A\,\xi^{r(M)-r_M(A)},\sp\forall\; A\subseteq E.
\]
Then we have
\[
\mu^\ast(fg)(\emptyset,E)=SC_M(\bm{xy},\lambda\xi),
\]
Where $\bm{ xy}$ means the collection of $x_ey_e$ for all $e\in E$. On the other hand, for any $S,T\subseteq E$, we have
\begin{eqnarray*}
\mu^\ast(f)(\emptyset, S)&=&\sum_{A\subseteq S}(-1)^{|A|}\,x_A\,\lambda^{r(M)-r_M(A)}=\lambda^{r(M)-r(M|S)}\,SC_{M|S}(\bm{-x},\lambda),\\
\mu^\ast(g)(T, E)&=&\sum_{T\subseteq A\subseteq E}(-1)^{|A-T|}\,(-y)_A\,\xi^{r(M)-r_M(A)}=(-y)_T\, SC_{M/T}(\bm{y},\xi).
\end{eqnarray*}
Applying Theorem \ref{Mobius-conj}, we have
\begin{theorem}{\rm \cite{Joseph2010}} The subset-corank polynomial $SC_{M}(\bm{x},\lambda)$ satisfies
\[
SC_M(\bm{xy},\lambda\xi)=\sum_{A\subseteq E}\lambda^{r(M)-r(M|S)}\,(-y)_T\, SC_{M|S}(\bm{-x},\lambda) \,SC_{M/T}(\bm{y},\xi).
\]
\end{theorem}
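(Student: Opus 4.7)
The plan is to invoke Theorem \ref{Mobius-conj} on the Boolean poset $(2^E,\subseteq)$ with the functions $f,g:2^E\to R$ defined in the paragraph immediately preceding the theorem, namely
\[
f(A)=x_A\,\lambda^{r(M)-r_M(A)},\sp g(A)=(-y)_A\,\xi^{r(M)-r_M(A)},
\]
where $R$ is the polynomial ring in the indeterminates $\bm{x},\bm{y},\lambda,\xi$. The whole content of the theorem is then a direct reading of the identity $\mu^\ast(fg)=\mu^\ast(f)\ast\mu^\ast(g)$ evaluated at the interval $(\emptyset,E)$ of $2^E$, once the three evaluations $\mu^\ast(fg)(\emptyset,E)$, $\mu^\ast(f)(\emptyset,A)$, and $\mu^\ast(g)(A,E)$ are identified with the appropriate specializations of $SC$.

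First I would verify the left-hand side: since $(fg)(A)=(-1)^{|A|}(xy)_A(\lambda\xi)^{r(M)-r_M(A)}$ and $\mu(\emptyset,A)=(-1)^{|A|}$ on the Boolean lattice, the two signs cancel, giving
\[
\mu^\ast(fg)(\emptyset,E)=\sum_{A\subseteq E}(xy)_A(\lambda\xi)^{r(M)-r_M(A)}=SC_M(\bm{xy},\lambda\xi).
\]
Second, I would record the two factor evaluations. The identity $\mu^\ast(f)(\emptyset,S)=\lambda^{r(M)-r(M|S)}\,SC_{M|S}(\bm{-x},\lambda)$ follows by factoring out $\lambda^{r(M)-r(M|S)}$ and rewriting $\sum_{A\subseteq S}(-1)^{|A|}x_A\lambda^{r(M|S)-r_M(A)}$ as $\sum_{A\subseteq S}(-x)_A\lambda^{r(M|S)-r_M(A)}$. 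The identity $\mu^\ast(g)(T,E)=(-y)_T\,SC_{M/T}(\bm{y},\xi)$ is the delicate sign computation: the incidence sign $\mu(T,A)=(-1)^{|A-T|}$ combines with $(-y)_A=(-1)^{|A|}y_A$ to give $(-1)^{|A-T|+|A|}=(-1)^{|T|}$ after reducing modulo $2$, and then factoring $y_T$ out of $y_A=y_Ty_{A-T}$ converts the remaining sum over $T\subseteq A\subseteq E$ into $SC_{M/T}(\bm{y},\xi)$, with the leading $(-1)^{|T|}y_T$ packaging up as $(-y)_T$.

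Finally, I would assemble the convolution. By Theorem \ref{Mobius-conj},
\[
SC_M(\bm{xy},\lambda\xi)=\mu^\ast(fg)(\emptyset,E)=\sum_{A\subseteq E}\mu^\ast(f)(\emptyset,A)\,\mu^\ast(g)(A,E),
\]
and substituting the two closed forms above yields exactly the claimed identity (with the understanding that the roles of $S$ and $T$ in the statement are both played by the summation variable $A$).

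The only real obstacle is the sign bookkeeping in the evaluation of $\mu^\ast(g)(T,E)$, since the M\"obius sign on the interval $[T,E]$ and the sign hidden in the shorthand $(-y)_A$ must be tracked carefully to arrive at the clean factor $(-y)_T$; everything else is a formal consequence of the ring homomorphism property of $\mu^\ast$ already proved in Theorem \ref{Mobius-conj}.
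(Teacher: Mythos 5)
Your proposal is correct and is essentially identical to the paper's own argument: the same choice of $f(A)=x_A\lambda^{r(M)-r_M(A)}$ and $g(A)=(-y)_A\xi^{r(M)-r_M(A)}$ on $(2^E,\subseteq)$, the same three evaluations of $\mu^\ast(fg)(\emptyset,E)$, $\mu^\ast(f)(\emptyset,S)$, $\mu^\ast(g)(T,E)$, and the same application of Theorem \ref{Mobius-conj} at the interval $(\emptyset,E)$. Your sign bookkeeping for $(-y)_T$ and your reading of $S$ and $T$ as the single summation variable $A$ match the paper's intent exactly.
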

We remark that other identities in \cite{Joseph2010} and \cite{Joseph2004} can be obtained in a similar way as above three.\\

\begin{center}
{\bf Acknowledgements}
\end{center}
I express my gratitude to Prof. Joseph PeeSin Kung for his kindly help in the preparation of this work. He advised me providing a combinatorial proof for Theorem \ref{region-conv} and helped checking many details. I thank Prof. Christos A. Athanasiadis for his suggestions on the relevance of Theorem \ref{comb-interpre} with other reciprocity theorems, and thank Prof. Yeong-nan Yeh and Prof. Beifang Chen for many helpful comments on this work.

\end{document}